
\documentclass[11pt]{article}



\usepackage{amsmath, amsthm, amssymb, bbm, url} 
\usepackage[usenames]{color}\usepackage{graphicx}
\newcommand{\scr}{\mathcal}

\newcommand{\N}{\mathbb{N}}

\newcommand{\eps}{\varepsilon}

\renewcommand{\P}[1]{\mathbb{P}\left[ #1 \right]}
\newcommand{\E}[1]{\mathbb{E}\left[ #1 \right]}

\newcommand{\of}[1]{\left( #1 \right) }

\newcommand{\abs}[1]{\left| #1 \right|}
\newcommand{\angbs}[1]{\left< #1 \right>}

\newcommand{\sqbs}[1]{\left[ #1 \right]}

\newtheorem{maintheorem}{Theorem}

\newtheorem{maincoro}{Corollary}

\newtheorem*{conjecture*}{Conjecture}

\newtheorem*{theorem*}{Theorem}


\newcommand{\bfrac}[2]{\left(\frac{#1}{#2}\right)}

\newcommand{\ra}{\rightarrow}

\newcommand{\set}[1]{\left\{#1\right\}}

\def\seq{\subseteq}
\def\es{\emptyset}

\def\Ex{\mathbb{E}}

\def\Pr{\mathbb{P}}

  \def\d{\delta}

 \def\m{\mu}  
   
   \def\Om{\Omega}

\newcommand{\beqn}{\begin{equation}}
\newcommand{\eeqn}{\end{equation}}
\newcommand{\zon}{\set{0,1}^n}
\newcommand{\bfo}{\mathbbm{1}}
\numberwithin{equation}{section}
\newcommand{\fse}{f^{=S}}
\newcommand{\fes}{f^{=S}}
\newcommand{\fss}{f^{\subseteq S}}
\newcommand{\hf}{\widehat{f}}
\newcommand{\infl}{\textbf{Inf}}
\newcommand{\tinfl}{\mathbbm{I}}
\newcommand{\norm}[1]{\left\|#1\right\|}
\newcommand{\sumin}{\sum_{i=1}^n}
\newcommand{\sumtenc}{\sum_{0 < \abs{S} \le 10C}}



\title{On Sharp Thresholds of Monotone Properties: Bourgain's Proof Revisited.}


\author{Deepak Bal\thanks{dbal@cmu.edu}\\
Department of Mathematical Sciences,\\ Carnegie Mellon University,\\ Pittsburgh, PA 15213.
}
\date{}
\begin{document}
\maketitle



\begin{abstract}
The purpose of this expository note is to give the proof of a theorem of Bourgain with some additional details and updated notation. The theorem first appeared as an appendix to the breakthrough paper by Friedgut, \emph{Sharp Thresholds of graph properties and the $k$-SAT Problem} \cite{F99}.  Throughout, we use notation and definitions akin to those in O'Donnell's book, \emph{Analysis of Boolean Functions} \cite{O12}. 
\end{abstract}

\newcommand{\pprod}[1]{\pi^{\otimes #1}}
\newcommand{\Egen}[1]{\mathop{\Ex}_{x\sim \pprod{n}}\sqbs{#1}}

\section{Background and Theorem}
Random structures often exhibit what is called a threshold phenomenon. That is, a relatively small change in a parameter can cause a swift change in the structure of the overall system. In the random graph $G(n,p)$, the probability space consisting of $n$ vertices and edge probability $p$, this phenomenon is a central object of study. 
In his 1999 paper, \emph{Sharp Thresholds of graph properties and the $k$-SAT Problem}, Friedgut gave a simple characterization of monotone graph properties with coarse thresholds. 
The result is important because unlike results which preceded, it holds when $p=p(n)\rightarrow 0$ like $n^{-\Theta(1)}$ which is a range in which many thresholds occur.  In the appendix to that paper, Bourgain gave a characterization of general monotone properties (as opposed to graph properties) which exhibit coarse thresholds. In this note, we explain the proof of this result with more details.

 Let $(\Om,\pi)$ be a finite probability space and for $n\in\N$, let $(\Om^n,\pi^{\otimes n})$ be the $n$ dimensional product probability space. We will write $x\sim \pprod{n}$ to indicate that $x$ is drawn from $\Om^n$ according to $\pprod{n}.$ Bourgain's result concerns the particular product space $(\zon,\mu_p^{\otimes n})$ where $\mu_p$ is the $p$-biased distribution on $\set{0,1}$. So $\m_p(1)=p$, $\m_p(0)=q:=1-p$. 
We will use the notation $\zon_p$ for $(\zon, \m^{\otimes n}_p)$. 

Throughout, unless otherwise specified, we will write $\Pr[\cdot]$ for $\Pr_{x\sim \pprod{n}}[\cdot]$ and $\Ex[\cdot]$ for $\Ex_{x\sim \pprod{n}}[\cdot]$. If we are in the context of $\zon_p$, then the probability and expectations will be with respect to $\m_p^{\otimes n}$.


In this note, we will consider $f:\Om^n \rightarrow\set{-1,1}$. This will simplify some calculations from Bourgain's proof where the range is taken to be $\set{0,1}$.
We say $f:\zon\rightarrow\set{-1,1}$ is monotone (increasing) if $f(x)\le f(y)$ whenever $x\le y$ component-wise. 
For any subset $S\seq[n]$, we write $x_S$ to refer to the coordinates of $x$ from $S$. In an abuse of notation, sometimes this will refer to a vector of length $\abs{S}$ and sometimes we will want $x_S$ to a be a vector of length $n$. Also, for $S\seq[n]$, we write $\bfo_S$ for the vector of length $n$ with $1$s in the positions corresponding to $S$ and $0$s elsewhere.

 Let $f:\Om^n\rightarrow\set{-1,1}$. The \emph{$i$th expectation operator}, $E_i$, applied to $f$ takes the expectation with respect to variable $x_i$. So
\[E_if(x) = \mathop{\Ex}_{x_i\sim \pi}[f(x_1,\ldots,x_{i-1},x_i,x_{i+1},\ldots,x_n)]\] is a function of $x_1,\ldots,x_{i-1},x_{i+1},\ldots,x_n.$  We also define the \emph{$i$th directional Laplacian operator}, $L_i$, by
\[L_if = f - E_i f.\]  The \emph{influence of coordinate $i$ on $f$} is defined as
\[\infl_i[f] = \angbs{f,L_if} = \angbs{L_if,L_if}\]
where the inner product is defined by
\[\angbs{f,g} = \mathop{\Ex}_{x\sim\pprod{n}}\sqbs{f(x)g(x)}.\]
The \emph{total influence of $f$} is $\tinfl[f] = \sum_{i=1}^n\infl_i[f]$.

Let $f = \sum_{S\subseteq [n]}\fse$ be the generalized Walsh expansion or orthogonal decomposition of $f$.  Recall, that the orthogonal decomposition of $f$ is the unique decomposition that satisfies the following two properties
\begin{enumerate}
\item For every $S\seq [n]$, $\fse(x)=\fse(x_1,\ldots,x_n)$ depends only on $x_i$ for which  $i\in S$.
\item For every $S\seq [n]$, $E_i\fse(x) = 0$ for all $i\in S$.
\end{enumerate}
In the case of $\zon_p$, for any $S\subseteq [n]$, we have that \[\fse(x) = \hf(S)\prod_{i\in S}r(x_i)\] where $r(0) = -\sqrt\frac{p}{1-p}$ and $r(1)=\sqrt\frac{1-p}{p}$.  We also have that $\hf(S) = \mathop{\Ex}_{x\sim \m_p^{\otimes n}}\sqbs{f(x)\prod_{i\in S}r(x_i)}$.

If $S\seq [n]$ and $\bar{S} = [n]\setminus S$, we let $\fss$ represent the function dependent on the coordinates of $S$ where we take the expectation of $f$ over the variables in $\bar{S}$. So if we think of $x$ as $(x_S,x_{\bar{S}})$, then
\[\fss(x) = \fss(x_S) = \mathop{\Ex}_{x_{\bar{S}} \sim \pi^{\otimes\bar{S}}} [f(x_S,x_{\bar{S}})]. \]
$\fes$ and $\fss$ are related by the following two formulas:
\begin{align}\label{fesis}
\fes = \sum_{J\seq S}(-1)^{\abs{S}-\abs{J}}f^{\seq J}
\end{align}
and
\begin{align}\label{fssis}
\fss = \sum_{J\seq S}f^{=J}
\end{align}
Basic Fourier formulas, which hold for the orthogonal decomposition, give us that
\begin{equation}\label{fourierforms1}L_if = \sum_{S\ni i}\fse,\qquad \infl_i[f] = \sum_{S\ni i}\norm{\fse}_2^2 = \sum_{S\ni i}\hf(S)^2\end{equation}
and
\begin{equation}\label{tinflformula}\tinfl[f] = \sum_{i=1}^{n}\norm{L_if}_2^2 = \sum_{S\seq [n]}\abs{S}\norm{\fes}_2^2\end{equation}
where the last equality in \eqref{fourierforms1} holds in the case of $\zon_p$.

For products of general finite probability spaces, we have the following result
\begin{maintheorem}\label{genprodspace}
 For any $f:\Om^n\rightarrow\set{-1,1}$ with $\E{f(x)}=0$ and $\tinfl[f] < C$, we have 
\begin{equation}\label{thm1ineq}
 \E{\mathop{\max}_{0 < \abs{S} \le 10C}\abs{\fss(x)}} > \d
\end{equation}
where $\d = 2^{-O(C^2)}$.
\end{maintheorem}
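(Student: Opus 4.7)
I proceed by contradiction: assume $M(x) := \max_{0 < |S| \le 10C} |\fss(x)|$ satisfies $\Ex[M(x)] \le \delta$, and aim to force $\delta \ge 2^{-O(C^2)}$. Set $k := 10C$. Since $|f| \equiv 1$, each $\fss$ is a conditional expectation of $f$, hence $|\fss| \le 1$ pointwise; so $M \le 1$ and $\Ex[M^2] \le \Ex[M] \le \delta$.

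Step 1 (low-degree mass is large). By (\ref{tinflformula}) and Markov, $\sum_{|S|>k}\|\fes\|_2^2 \le \tinfl[f]/k < 1/10$. Since $\|f\|_2^2 = 1$ and $f^{=\emptyset} = \Ex[f] = 0$, we obtain
$W := \sum_{0 < |S| \le k} \|\fes\|_2^2 > 9/10$.
The goal is then to derive the contradictory upper bound $W \le 2^{O(C^2)}\delta$.

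Step 2 (per-set control). For each $0 < |S| \le k$, using $|\fss|\le 1$ we get $\|\fss\|_2^2 \le \|\fss\|_1 \le \Ex[M] \le \delta$. By (\ref{fssis}) and Parseval this gives $\sum_{J \subseteq S} \|f^{=J}\|_2^2 \le \delta$. Pushing the bound pointwise via the M\"obius inversion (\ref{fesis}), together with $f^{\subseteq\emptyset}\equiv 0$, yields $|\fes(x)| \le 2^{|S|} M(x)$ and hence $\|\fes\|_2^2 \le 4^{|S|}\delta$ for every $0 < |S| \le k$.

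Step 3 (the main obstacle). The difficulty is converting these per-set bounds into a dimension-free bound on $W$: the number of sets of size up to $k$ is of order $n^k$, so the naive sum $\sum_S 4^{|S|}\delta$ is hopeless. I plan to follow Bourgain by introducing an auxiliary random subset $T \subseteq [n]$ where each coordinate is kept independently with a suitable probability $\rho$ of order $k/n$, and then analyzing $\|f^{\subseteq T}\|_2^2$ together with appropriate weighted variants of $\|\fes\|_2^2$ averaged over $T$ via a second-moment computation. Combining this averaging with an inductive application of the deterministic bound $\|f^{\subseteq S}\|_2^2 \le \delta$ to nested subsets of $T$, and reusing (\ref{fesis}) at successive scales, should eliminate the $n$-dependence and yield $W \le 2^{O(C^2)}\delta$. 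I expect the most delicate point to be the combinatorial bookkeeping of this iteration: the $4^{|S|}$ factors produced by the M\"obius inversion must be made to collapse into a single $2^{O(C^2)}$ factor over the $O(C)$ levels of iteration, rather than compounding to something doubly exponential in $C$.
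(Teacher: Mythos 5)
Your Step 1 and your closing M\"obius conversion between $\fes$ and $\fss$ coincide with the paper's opening and closing moves, and the pointwise bound $\abs{\fes(x)}\le 2^{\abs{S}}M(x)$ from Step 2 is also used there. But the proposal stops exactly where the theorem's difficulty begins: Step 3 is a statement of intent, not an argument. The per-set bounds $\norm{\fes}_2^2\le 4^{\abs{S}}\d$ cannot be summed over the $\Theta(n^{\abs{S}})$ sets of each size, as you note, and the sketched device of a random sparse subset $T$ of density about $10C/n$ is never developed into an inequality; no second-moment computation is exhibited, no iteration is defined, and it is not clear that any such scheme closes. The missing ingredient is a genuinely dimension-free analytic estimate, and nothing in the proposal supplies one.

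The paper's route is different in structure as well as in tools: it never attempts to bound $W=\sum_{0<\abs{S}\le 10C}\norm{\fes}_2^2$ from above by a multiple of $\d$. Instead it lower-bounds $\E{\max_{0<\abs{S}\le 10C}\fes(x)^2}$ directly. It introduces the square functions $h_i(x)=\big(\sum_{S\ni i,\,\abs{S}\le 10C}\fes(x)^2\big)^{1/2}$ and $h(x)=\big(\sum_{\abs{S}\le 10C}\fes(x)^2\big)^{1/2}$ and invokes Proposition 6 of Bourgain's 1979 paper on Walsh subspaces of $L^p$-product spaces to obtain $\norm{h_i}_{4/3}^{4/3}\le C_1\infl_i[f]$ (hence $\sum_i\norm{h_i}_{4/3}^{4/3}\le C\cdot C_1$) and $\norm{h}_4\le C_1$ with $C_1=2^{O(C)}$; this is precisely the dimension-free input your plan lacks. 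With the indicators $\eta_i=\bfo_{\{h_i(x)>\eps\}}$ and $\xi=\bfo_{\{\sum_i\eta_i(x)<M\}}$, the quantity $\E{\sum_{0<\abs{S}\le 10C}\fes(x)^2}\ge 9/10$ is split into three terms bounded respectively by $C_1^2\sqrt{C/(M\eps^2)}$, $\eps^{2/3}CC_1$, and $M^{10C}\E{\max_{0<\abs{S}\le 10C}\fes(x)^2}$; choosing $\eps=2^{-O(C)}$ and $M=O(1/\eps)$ makes the first two small and forces the third to exceed a constant, yielding the $2^{-O(C^2)}$ bound. Unless you import this $L^q$ square-function estimate (or an equivalent hypercontractive tool) and an analogous decomposition, the proposal has a genuine gap at its central step.
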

This result is the main ingredient in Bourgain's proof and it does not rely on the space being $p$-biased bits, so we will prove it here without such an assumption. 

For a monotone boolean function $f:\zon\rightarrow \set{-1,1}$, Margulis \cite{Mar} and Russo \cite{Rus} proved the following relationship between the total influence and the sharpness of the threshold:
\begin{equation}\label{mr}
p(1-p)\frac{d}{dp}\P{f(x)=1}=\tinfl[f]
\end{equation}
where $\Pr$ and $\tinfl$ are both with respect to $\m_p^{\otimes n}$. In other words, the rate of transition of $f$ from $-1$ to $1$ with respect to the rate of increase of $p$ is determined by the total influence. Hence functions with large total influence should have ``sharp'' thresholds and functions with small total influence should have ``coarse'' thresholds.  

Bourgain's result in \cite{F99} is now given. This result basically states the following. Let $f:\zon\ra\set{-1,1}$ be a monotone boolean function and let $p$ be the critical probability (which is allowed to approach 0 rapidly with $n$), when $f$ is equally likely to be $-1$ or $1$. Then if $f$'s total influence is bounded, either (1) a non-negligible portion (according to $\mu_p^{\otimes n}$) of the $x$'s with $f(x)=1$ have a small witness, or (2) there exists a small set of coordinates such that conditioning on these coordinates being 1 boosts the expected value of $f$ by a non-negligible amount.  Keep in mind that in the following statement, $\Ex,\Pr$ and $\tinfl$ are with respect to $\m_p^{\otimes n}$.

\begin{maincoro}\label{mainpbiased}
Let $f:\set{0,1}^n \ra \set{-1,1}$ be monotone (increasing) and suppose that $p=p(n)$ is such that $\E{f}=0$ and $\tinfl[f] < C$. Then there exists some $\d' = 2^{-O(C^2)}$ such that if $p < \frac{\d'}{20C}$ then
at least one of the following two possibilities holds:
\begin{enumerate}
\item  
\begin{equation}\label{condition1}\P{\exists S \subset [n],\,  \abs{S} \le 10C,\,\bfo_{S}\le x,\,f(\bfo_S)=1} > \d'.\end{equation}
\item There exists $S'\seq[n]$ with $\abs{S'} \le 10C$ with $f(\bfo_{S'})=0$ such that 
\begin{equation}\label{condition2}
f^{\seq S'}(\bfo_{S'}) > \d'.
\end{equation}
\end{enumerate}
\end{maincoro}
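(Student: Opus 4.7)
The plan is to apply Theorem~\ref{genprodspace} in the $p$-biased setting and then use monotonicity together with the smallness of $p$ to convert its Fourier-type conclusion into one of the two structural alternatives stated in the corollary.

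I begin by invoking Theorem~\ref{genprodspace} to obtain some $\delta = 2^{-O(C^2)}$ with
\[\E{\max_{0<|S|\le 10C}\abs{f^{\seq S}(x)}} > \delta,\]
and then pass (using $0\le\abs{f^{\seq S}}\le 1$) to the probability bound $\Pr[B]\ge\delta/2$, where $B$ denotes the event that this maximum exceeds $\delta/2$. The next step is to remove the absolute value. From $\E{f^{\seq S}(x_S)}=\E{f}=0$, together with $f^{\seq S}(x_S)\le f^{\seq S}(\bfo_S)\le 1$ and Bernoulli's inequality for $(1-p)^{|S|}$, one shows that $f^{\seq S}$ evaluated at the all-zero configuration on $S$ is bounded below by $-O(p\abs{S})$. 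Monotonicity of $f^{\seq S}$ then gives $f^{\seq S}(x_S)\ge -O(pC)$ for every $x_S$, and since $p<\delta'/(20C)$ this is strictly larger than $-\delta/2$. Consequently, on $B$ the maximizing set $S^*(x)$ must satisfy $f^{\seq S^*(x)}(x_{S^*(x)}) > \delta/2$.

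To obtain a subset contained in $T(x):=\set{i:x_i=1}$, I set $S'(x):=S^*(x)\cap T(x)$. Monotonicity of $f$ (integrating out the $0$'s on $S^*(x)\setminus T(x)$ can only increase the value) yields
\[f^{\seq S'(x)}(\bfo_{S'(x)})\ \ge\ f^{\seq S^*(x)}(x_{S^*(x)})\ >\ \delta/2,\]
with $\abs{S'(x)}\le 10C$ and $\bfo_{S'(x)}\le x$ holding automatically. Moreover $\abs{S'(x)}\ge 1$, since otherwise $f^{\seq\emptyset}=\E{f}=0$ would contradict the lower bound.

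The endgame is a dichotomy on $f(\bfo_{S'(x)})\in\set{-1,1}$. If some $x\in B$ has $f(\bfo_{S'(x)})\ne 1$, then that particular $S'$ is precisely the witness demanded by the second alternative. Otherwise $f(\bfo_{S'(x)})=1$ for every $x\in B$, so each such $x$ already contains a small witness of size at most $10C$ with $\bfo_{S'(x)}\le x$, and the first alternative holds with probability at least $\Pr[B]\ge\delta/2$. Taking $\delta'=\delta/2$ (after absorbing the $(1-p)^{\abs{S}}$ slack into the $2^{-O(C^2)}$ constants) completes the proof. The principal obstacle---and the reason for the hypothesis $p<\delta'/(20C)$---is the second step: ruling out uniformly across small $S$ that the maximum is attained by a large \emph{negative} value of $f^{\seq S}$.
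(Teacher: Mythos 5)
Your proposal is correct and follows essentially the same route as the paper's proof: apply Theorem \ref{genprodspace}, use monotonicity of $f^{\seq S}$ together with $p<\d'/(20C)$ to rule out a large negative value and drop the absolute value, pass to $S'=S^*\cap\set{i:x_i=1}$ by monotonicity, and conclude via the dichotomy on $f(\bfo_{S'})$. The only (immaterial) difference is organizational: the paper assumes alternative (1) fails and extracts a single witness $\bar{x}$, whereas you work on the positive-probability event $B$ and dichotomize at the end.
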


\begin{proof}[Proof of Corollary \ref{mainpbiased} ]
Let $\d' = \d/2$ where $\d$ is given by Theorem \ref{genprodspace}. Suppose that the first alternative of the theorem, \eqref{condition1}, does not hold, i.e., 
\begin{align} \label{notcond1}
	\P{\exists S \subset [n],\,  \abs{S} \le 10C,\,\bfo_{S}\le x,\,f(\bfo_S)=1}  \le \d'.
\end{align}
Then applying Theorem \ref{genprodspace}, if $n$ is sufficiently large, there must exist $\bar{x} \in \zon$ and $S\seq[n]$, $\abs{S}\le 10C$ such that
for all $x' \le \bar{x}$ with at most $10C$ $1$'s, we have $f(x')=0$
and
\begin{align}\label{deltaover2}\abs{\fss(\bar{x})} > \d'.\end{align}
Now by monotonicity of $f$, we have for all $S, x_S$
\begin{align*}
\fss(x_S) &= \Ex_{x_{\bar{S}} \sim \m_p^{\bar{S}}} [f(x_S,x_{\bar{S}})] \\
&\ge \Ex_{x_{\bar{S}} \sim \m_p^{\bar{S}}} [f(\vec{0}_S,x_{\bar{S}})] \\
&\ge \E{f(x) - \sum_{i\in S}\bfo_{\set{x_i = 1}}} \\
&= - p\abs{S} > \frac{\d'}{2}.
\end{align*}
So \eqref{deltaover2} implies that \[\fss(\bar{x}) >\d'\]
which implies the second alternative of the theorem, \eqref{condition2}, by taking
\[S' = S \cap \set{i : \bar{x}_i = 1}.\]

\end{proof}

The following easy corollary may be a useful statement.

\begin{maincoro}
Let $f:\set{0,1}^n \ra \set{-1,1}$ be monotone (increasing) and suppose that $p=p(n) < \frac{\d}{100C}$ is such that $\E{f}=0$ where $\d = 2^{-O(C^2)}$. Furthermore, suppose that $\tinfl[f] < C.$ Then there exists a subset $S\seq[n]$ with $\abs{S}\le 10C$ such that
\begin{equation}\label{coreqn}\E{f(x)\mid x_S=(1,\ldots,1)} > \d.\end{equation}
\end{maincoro}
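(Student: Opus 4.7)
The plan is to apply Corollary \ref{mainpbiased} directly to $f$. Let $\d'$ denote the constant produced by that corollary; both $\d$ (appearing in the present statement) and $\d'$ will be taken of the form $2^{-O(C^2)}$, and in fact choosing $\d = \d'$ suffices. The hypothesis $p < \d/(100C)$ is then strictly stronger than the hypothesis $p < \d'/(20C)$ required to invoke Corollary \ref{mainpbiased}, so one of its two alternatives must hold.

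In alternative \eqref{condition1}, the event $\set{\exists\, S : \abs{S}\le 10C,\ \bfo_S\le x,\ f(\bfo_S)=1}$ has positive probability, so in particular there exists at least one such set $S$. For this $S$, monotonicity forces $f(x) \ge f(\bfo_S) = 1$ whenever $x_S = (1,\ldots,1)$, and since $f$ is $\set{-1,1}$-valued, $f$ is identically $1$ on the conditioning event. Hence $\E{f(x)\mid x_S=(1,\ldots,1)} = 1 > \d$, which gives the conclusion. Note that $S = \es$ cannot arise here because $f(\bfo_\es) = 1$ would force $f\equiv 1$ by monotonicity, contradicting $\E{f}=0$.

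In alternative \eqref{condition2}, there exists $S'\seq [n]$ with $\abs{S'}\le 10C$ and $f^{\seq S'}(\bfo_{S'}) > \d'$. By the definition of $f^{\seq S'}$,
\[ f^{\seq S'}(\bfo_{S'}) = \mathop{\Ex}_{x_{\bar{S'}}\sim \m_p^{\otimes \bar{S'}}}\sqbs{f(\bfo_{S'}, x_{\bar{S'}})} = \E{f(x)\mid x_{S'}=(1,\ldots,1)}, \]
so taking $S = S'$ yields $\E{f(x)\mid x_S=(1,\ldots,1)} > \d' = \d$.

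There is no real obstacle: all of the heavy lifting lives in Theorem \ref{genprodspace} and its consequence Corollary \ref{mainpbiased}. The only substantive point is that alternative \eqref{condition1} is phrased as a probability bound rather than as a conditional-expectation bound, and this is bridged in one line by monotonicity: the mere existence of a small $S$ with $f(\bfo_S)=1$ already pins $f$ to $1$ throughout the event $\set{x_S = (1,\ldots,1)}$.
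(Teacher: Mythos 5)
Your proposal is correct and follows essentially the same route as the paper: apply Corollary \ref{mainpbiased}, observe that in the first alternative monotonicity pins $f$ to $1$ on the event $\set{x_S=(1,\ldots,1)}$ so the conditional expectation equals $1$, and that in the second alternative $f^{\seq S'}(\bfo_{S'})$ is literally the conditional expectation in \eqref{coreqn}. Your write-up merely supplies a bit more detail (the $\d=\d'$ bookkeeping and the remark about $S=\es$) than the paper's two-line derivation.
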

To derive this from Corollary \ref{mainpbiased}, note that if the first alternative holds, then there exists a small $S$ which makes the expectation in \eqref{coreqn} equal to 1. If the second alternative holds, note that \eqref{condition2} and \eqref{coreqn} are equivalent.

As a corollary of his very general theorem, Hatami \cite{Hat} proves that in fact the expectation in \eqref{coreqn} can be made arbitrarily close to 1. The size of the guaranteed $S$ may have size exponential in $C^2$, but it is still independent of $n$. 

\section{The Proof}

\begin{proof}[Proof of Theorem \ref{genprodspace}]
First observe that the facts $\norm{f^{=\es}}_2^2 = 0$ and $\sum_{S\seq[n]}\norm{\fes}_2^2 = 1$ and the assumption that $\sum_{S\seq[n]}\abs{S}\norm{\fes}_2^2 < C$ imply that
\begin{equation} \label{onetenthlb}
\frac{9}{10} \le \sum_{0 < \abs{S} \le 10C}\norm{\fes}_2^2
\end{equation}
since  
\begin{align*}
\sum_{\abs{S} > 10C} \norm{\fes}_2^2 &\le \sum_{S\seq[n]}\frac{\abs{S}}{10C}\norm{\fes}_2^2 \\
&<\frac{C}{10C} = 1/10.
\end{align*}

Now, consider the following functions
\[h_i(x):=\of{\sum_{\stackrel{S\ni i}{ \abs{S}\le 10C}}\abs{\fse(x)}^2}^{1/2}\]
and 
\[h(x) = \of{\sum_{\abs{S}\le 10C}\abs{\fes(x)}^2}^{1/2}\]
By Prop. 6 of \cite{B79}, we may say that for a fixed $1<q\le 2$, we get
\begin{align}\norm{h_i(x)}_q^q &\le c_1\norm{L_i f}_q^q \nonumber \\
&= c_1\E{\abs{L_if}^q} \nonumber\\
&\le c_2\E{\abs{L_if}} \nonumber\\
&\le C_1\E{(L_if)^2} \label{2.4} \nonumber\\
&= C_1\infl_i[f].
\end{align}
with $C_1=C_1(q) =2^{O(C)}$ and $c_1,c_2$ are some constants which also depend only on $q$.  The reader should note that in the proof that follows, we will only apply the result of \cite{B79} with $q=4/3.$ If $q'=\frac{q}{q-1}$, then we also have
\begin{equation}\label{qprimeineq}\norm{h(x)}_{q'} \le C_1\norm{f}_{q'} = C_1.\end{equation}
Hence we have
\begin{align}\label{sumqnorms}\sum_{i=1}^n\norm{h_i(x)}_q^q \le C_1\sum_{i=1}^n\infl_i[f] \le C\cdot C_1.\end{align}

Let $0<\eps < M < \infty$ be constants which are taken to be $\eps = 2^{-O(C)}$ and $M=O\bfrac{1}{\eps}$ and let
\begin{align}
\eta_i(x) &= \bfo_{\{h_i(x) > \eps\}}\\
\xi(x) &= \bfo_{\{\sum_i\eta_i(x) < M\}}.
\end{align}
Specific values for $M$ and $\eps$ may be determined in terms of $C$ and $C_1$ by analyzing the inequalities that follow.

Now $1-\xi(x)$ is the indicator of the event that there are more than $M$ coordinates $i$, such that $h_i(x) >\eps.$ Given relation \eqref{tinflformula} and the assumption that total influence is bounded, we should expect this event to have small probability.  Hence we have, using Markov's theorem twice, that
\begin{align*}
\E{1-\xi(x)} &\le \frac{1}{M}\E{\sumin\eta_i(x)} \\
&\le \frac{1}{M\eps^2}\E{\sumin h_i(x)^2} \\
&\le\frac{1}{M\eps^2}\E{\sumin \sum_{S\ni i} \fes(x)^2 }\\
&\le\frac{C}{M\eps^2}.
\end{align*}

Now, inequality \eqref{onetenthlb} tells us that
\[\frac{9}{10} < \E{\sum_{0<\abs{S}\le 10C}\fes(x)^2}.\]
Note that for any $x$, either there exist $\ge M$ many $i$ such that $h_i(x) > \eps$, or there are $<M$ such $i$. In the latter case, there are two types of $S\seq[n]$ with $0<\abs{S}\le 10C$: those which contain an $i$ such that $h_i(x)\le \eps$ and those containing only $i$'s such that $h_i(x)>\eps$.

Hence, using the indicator functions $\xi, \eta_i$, and recalling the definitions of $h(x)$ and $h_i(x)$, we may split up the following expectation as
\begin{align}
\E{\sum_{0<\abs{S}\le 10C}\fes(x)^2}  &\le \E{h(x)^2(1-\xi(x))}\label{term1}\\
&+ \E{\sumin h_i(x)^2(1-\eta_i(x))}\label{term2} \\
&+ \E{\sumtenc \fes(x)^2\of{\prod_{i\in S}\eta_i(x)}\xi(x)}.\label{term3} 
\end{align}
We now bound each of these terms in turn.

For \eqref{term1}, we apply Cauchy-Schwarz and see that
\begin{align}
\E{h(x)^2(1-\xi(x))} &\le \E{h(x)^4}^{1/2}\E{(1-\xi(x))^2}^{1/2}\label{1.1} \\
&\le \norm{h(x)}_4^2\E{1-\xi(x)}^{1/2}\label{1.2}\\
&\le C_1^2\cdot\sqrt{\frac{C}{M\eps^2}}\label{1.3}
\end{align}
where we used \eqref{qprimeineq} with $q'=4$ (and hence $q=4/3$) to go from \eqref{1.2} to \eqref{1.3}.

For \eqref{term2}, we note that in this expectation, $h_i(x) \le \eps$ for any $x$ such that $\eta_i(x)=0$. Also, since each $h_i$ is a positive function, we may write $h_i^2 = h_i^{2/3}h_i^{4/3}$. So
\begin{align*}
\E{\sumin h_i(x)^2(1-\eta_i(x))} & = \sumin \E{h_i(x)^{2/3}h_i(x)^{4/3}(1-\eta_i(x))} \\
&\le \eps^{2/3}\sumin \E{\abs{h_i(x)}^{4/3}} \\
&=\eps^{2/3}\sumin \norm{h_i(x)}_{4/3}^{4/3}\\
&\le \eps^{2/3}\cdot C\cdot C_1
\end{align*}
where we used \eqref{sumqnorms} with $q=4/3$ to get the last line.

Finally, for \eqref{term3}, we first observe that for any $x$, we have that
\[\sumtenc\of{\prod_{i\in S}\eta_i(x)}\xi(x) < M^{10C}\]
since if $\xi(x)=1$, then $\scr{M}_x =\set{i : \eta_i(x) = 1}$ has $\abs{\scr{M}_x} < M$. So the non-zero terms in the sum correspond to $S\subseteq \scr{M}_x$, $0 < \abs{S} \le 10C$. So we get
\begin{align*}
&\E{\sumtenc \fes(x)^2\of{\prod_{i\in S}\eta_i(x)}\xi(x)} \\
 &\le \E{\max_{0<\abs{S} \le 10C}\fes(x)^2 \sumtenc\of{\prod_{i\in S}\eta_i(x)}\xi(x)} \\
&\le M^{10C}\E{\max_{0<\abs{S} \le 10C}\fes(x)^2}.
\end{align*}

Adding these three estimates gives
\begin{align}
\frac{9}{10} < C_1^2\sqrt{\frac{C}{M\eps^2}} + \eps^{2/3} C C_1 +  M^{10C}\E{\max_{0<\abs{S} \le 10C}\fes(x)^2}.
\end{align}
Now, by taking $\eps = 2^{-O(C)}$ and $M = O(1/\eps)$, we easily have that
\[\E{\max_{0<\abs{S} \le 10C}\fes(x)^2} > 2^{-O(C^2)}.\]
Now note that for any $S\seq [n]$, by \eqref{fesis},
\begin{align}
 \abs{\fes} &= \abs{\sum_{J\seq S}(-1)^{\abs{S}-\abs{J}}f^{\seq J}} \nonumber \\
 &\le \sum_{J\seq S}\abs{f^{\seq J}} \le 2^{\abs{S}}\max_{J\seq S}\set{\abs{f^{\seq J}}}\label{other bound} \\
 & \le 2^{\abs{S}} \label{other bound2}.
\end{align}
since $\abs{f^{\seq S}} \le 1$.
So applying \eqref{other bound} and \eqref{other bound2} and using the fact that $f^{\seq\es} =\E{f}= 0$, we have
\begin{align}
 2^{-O(C^2)} &< \E{\max_{0<\abs{S} \le 10C}\fes(x)^2} \nonumber\\
&= \E{\max_{0<\abs{S} \le 10C}\abs{\fes(x)}\abs{\fes(x)}} \nonumber \\
&\le 2^{10C}\E{\max_{0<\abs{S} \le 10C}\abs{\fes(x)}} \nonumber\\
&\le 2^{20C}\E{\max_{0<\abs{S} \le 10C}\max_{J\seq S}\abs{f^{\seq J}(x)}} \nonumber \\
& = 2^{20C}\E{\max_{0<\abs{S} \le 10C}\abs{\fss(x)}} \nonumber
\end{align}
which completes the proof.


\end{proof}

\thebibliography{99}

\bibitem{B79} J. Bourgain, ``Walsh subspaces of $L^p$-product spaces." S\'eminaire Analyse fonctionnelle (dit "Maurey-Schwartz") (1979-1980): 1-14. \url{http://eudml.org/doc/109239}.

\bibitem{F99} E. Friedgut, J. Bourgain, ``Sharp Thresholds of Graph Properties, and the $k$-SAT Problem.'' Journal of the American Mathematical Society (1999): Vol. 12, No. 4, 1017-1054.

\bibitem{Hat} H. Hatami, ``A structure theorem for Boolean functions with small total influences.'' Annals of Mathematics, to appear. 

\bibitem{Mar} G. A. Margulis, ``Probabilistic characteristics of graphs with large connectivity.'' Problemy Perdaci Informacii (1974): Vol. 10, No. 2, 101-108.

\bibitem{O12} R. O'Donnell, ``Analysis of Boolean Functions.'' \url{http://www.analysisofbooleanfunctions.org}

\bibitem{Rus} L. Russo, ``An approximate zero-one law.''  Z. Wahrsch. Verw. Gabiete (1981): Vol. 61, No. 1, 129-139.

\end{document}